\newcommand \HH{{\tilde{h}}}
\newcommand \setR{\mathbb{R}}
\newcommand \OO{\Omega}
\newcommand \virg{ \, , \,\,}
\newcommand \ONE{\mathds{1}}
\newcommand \norme[1]{\left\|#1\right\|}
\newcommand \abs[1]{\left|#1\right|}
\newcommand \tr{\gamma_0}
\newtheorem{prop}{Proposition}{\bf}{\it}
\newtheorem{lemma}{Lemma}{\bf}{\it}
\begin{document}

\title{Approximation of single layer distributions by Dirac masses in Finite Element computations}

\author{B. Fabr\`eges\footnote{Universit\'e Paris-Sud 11, Laboratoire de Math\'ematiques, B\^atiment 425, 91405 Orsay. E-mail : benoit.fabreges@math.u-psud.fr}        \and
        B. Maury\footnote{Universit\'e Paris-Sud 11, Laboratoire de Math\'ematiques, B\^atiment 425, 91405 Orsay. E-mail : bertrand.maury@math.u-psud.fr}
}

\maketitle

\begin{abstract}
We are interested in the finite element solution of elliptic problems with a right-hand side of the single layer distribution type. Such problems arise when one aims at accounting for a physical hypersurface (or line, for bi-dimensional problem), but also in the context of fictitious domain methods, when one aims at accounting for the presence of an inclusion in a domain (in that case the support of the distribution is the boundary of the inclusion). The most popular way to handle numerically the single layer distribution in the finite element context is to spread it out by a regularization technique. An alternative approach consists in approximating the single layer distribution by a combination of Dirac masses.  As the Dirac mass in the right hand side does not make sense at the continuous level, this approach raises particular issues. 
The object of the present paper is to give a theoretical background to this  approach. We present a rigorous numerical analysis of this approximation, and  we present two examples of application of the main result of this paper. The first one is a Poisson problem with a single layer distribution as a right-hand side and the second one is another Poisson problem where the single layer distribution is the Lagrange multiplier used to enforce a Dirichlet boundary condition on the boundary of an inclusion in the domain. Theoretical analysis is supplemented by numerical experiments in the last section.
\end{abstract}

\section{Introduction}
\label{intro}
We are interested in the numerical handling of elliptic problems with a right-hand side of the single layer distribution type, i.e. problems of the form

\begin{eqnarray*}
-\Delta u & = & \varphi \delta_\gamma \quad \text{ in } \OO, \\
u & = & 0 \quad \text{ on } \partial \OO
\end{eqnarray*}
where $\OO$ is a bounded domain in  $\setR^d$,
and  $\varphi \delta_\gamma$,  in $H^{-1}(\Omega)$, can be formally written as 
\begin{equation*}
\left < \varphi \delta_\gamma, v \right >_{H^{-1}, \, H^1_0}  = \int_\gamma \varphi v 
\end{equation*}
where  $\gamma$ is  a $d-1$ dimensional smooth manifold (typically the boundary of a connected subdomain $\omega\subset\subset \OO$) and $\varphi$ is in a Sobolev space $H^{-1/2 + s}(\gamma)$ for $s > 0$.

This type of problem is commonly encountered in numerical modeling. Peskin and Mc Queen~\cite{Peskin} modeled the motion of the heart wall by a collection of elastic fiber immersed in a fluid. The force applied to the fluid by these fibers is a single layer distribution. 
In a similar context, Gerbeau et al.~\cite{Gerbeau}  modeled more recently a valve by an elastic interface immersed in a fluid.
Likewise, Cottet and Maitre in \cite{CottetMaitre} track the interface of an elastic membrane immersed in a fluid by a level set method and the elastic forces on the membrane appears in the right-hand side of the Navier-Stokes equations. In \cite{Herrmann} the interface between two different fluids is tracked with a level set method and the single layer distribution is the surface tension.

This kind of problem also appears in the context of fictitious domain methods. 
For examples in \cite{Glowinski} and \cite{Bertrand} the authors use Lagrange multipliers to enforce Dirichlet boundary conditions. These Lagrange multipliers act as singular distributions of forces supported by the boundary. In~\cite{Pan}, such an approach is proposed to simulate the sedimentation of rigid particles in a fluid. The rigid body constraint is enforced with Lagrange multipliers supported on the boundary of the rigid bodies.
The single layer distribution also appears in the so called Fat Boundary Method (see \cite{Maury01}, or \cite{Bertoluzza} for a full analysis of the method). The resolution of a Poisson like problem in a domain with holes with this method consists of splitting the Poisson problem into two new problems: a global problem, which is solved on a regular mesh, and a local problem around the holes, which introduces the single layer distribution on the boundary of the holes.

There exist several  approaches to solve these problems numerically, the choice of one of them depending on the general framework of the discrete problem. In \cite{Peskin}, the authors have chosen to solve the problem in a finite difference framework. They take a lattice on the whole domain to solve the fluid part of the problem and a uniform collection of points on the fibers, which are not necessarily the same as the points of the lattice. In order to take into account the force of the fibers to the fluid, they regularize the Dirac Delta function on every points on the fibers to spread the single layer distribution out on the computational lattice. This is the so called Immersed Boundary (IB) method. Applications of the IB method can be found in section 9 of \cite{Peskin2002} and, for instance, in \cite{IBGivelbergBunn,IBMittalIaccarino,IBTysonJordanHebert,IBKimPeskin} for more recent ones. Another way to deal with the Dirac Delta functions is to directly inject the jump of the normal derivatives of the corresponding solution in the finite difference scheme. This method, called the Immersed Interface Method (IIM), has been introduced by Leveque and Li in \cite{IIMLevequeLi}. For applications of the IIM, one can look in \cite{IIMTanLeLiLimKhoo} and the references therein.

In a variational context, one can compute the integrals involving the single layer distribution of the variational formulation in some ways. It is the case in a finite element context as in \cite{Glowinski} where the authors use a constant piece-wise element to discretize the Lagrange multiplier space and then compute the integrals exactly. The IB method has been adapted to the finite element framework where one can deal with the Dirac Delta function in a variational way (see \cite{FEMIBBoffiGastaldi,FEMIBIlincaHetu}). In the same spirit, the idea is to discretize the single layer distribution by using a combination of Dirac masses at a collection of points describing the interface. That is the method used in \cite{Gerbeau,Pan,Bertrand} to approximate the Lagrange multiplier defined on an interface of the computational domain. This method requires no additional mesh but only the collection of points on the interface and is convenient to implement in a code. Moreover the integrals which appear in the variational formulation are fast and easy to compute. 

Up to our knowledge, the latter approach has not been justified from a theoretical point of view. 
One of the difficulties is that a well defined linear functional, the initial right-hand side which we will consider in $H^{-1}$, is replaced by a combination of Dirac masses, which do {\em not} make sense at the continuous level (as soon as the dimension $d$ is greater than $1$).

The aim of this paper is to give a numerical analysis of this method, in the case of a scalar  Poisson problem with a single layer distribution as a right hand side in the two dimensional setting. This paper is structured as follow: In section \ref{sec:1} we  establish an error estimate when $\varphi$ is in $H^{-1/2 + s}$ with $0 < s \leq 1/2$ and show that there is a saturation of the order once $s$ is greater than $0.5$ (when $\varphi$ is at least in $L^2$). Then, in section \ref{sec:2} we present some examples on which this result can be used to estimate the error. In section \ref{sec:3}, we present the numerical results to validate the error analysis done in section \ref{sec:1}.


\section{General theorems}
\label{sec:1}
Let $\Omega$ be a domain  in $\mathbb{R}^2$ and let $\omega$ be a smooth subdomain with boundary $\gamma$. We consider a single layer distribution supported by $\gamma$:
\begin{equation*}
\varphi \delta_\gamma \in H^{-1}(\Omega)
 \; : \; v \in V = H^1_0(\OO) \longmapsto   \left< \varphi ,\, v \right>_{H^{-1/2 +s}(\gamma), \, H^{1/2 - s}(\gamma)}, 
\end{equation*}
with $\varphi \in H^{-1/2 + s}(\gamma)$, $0\leq s < 1$.
The expression above makes sense as $v\in H^1_0(\Omega)$, so that its trace on $\gamma$ is in $H^{1/2}(\gamma)\subset H^{1/2-s}$.

Given a conforming  triangulation $T_h$ of $\OO$, we denote by $V_h$ the associated $P^1$ finite element space:
\begin{equation*}
V_h = \left \{v_h \in C^0(\overline\OO) \virg  {v_h }_{| K } \hbox{ is affine }\virg \forall K \in T_h
\right \}.
\end{equation*}

 We are interested in approximating $\varphi \delta_\gamma$ over $V_h$ by a combination  of Dirac masses. 
 Note that it does not make sense at the continuous level, as $V' = H^{-1}(\OO)$ contains no Dirac mass. 
Approximation  properties of such an appropriate combination will be expressed by Proposition~\ref{prop:phiphih}. It is mainly based on the following two lemmas.

\begin{lemma}
\label{lemma:Hs}
Let $I$ be the unit interval $(0,1)$, $h>0$, and $(T_h)$ a family of quasi-uniform triangulations of $I$. We shall represent $T_h$ by its subintervals 
$\gamma_1$, \dots, $\gamma_N$ (we drop the explicit dependence of $\gamma_i$ upon $h$).  Quasi-uniformity  expresses
\begin{equation*}
ch \leq \left |\gamma_i \right | \leq  Ch\virg \hbox{ with } 0<c<C.
\end{equation*}
We now consider $x_1$, \dots, $x_N$, with $x_i\in \gamma_i$.
For any $v \in H^1(I)$, we denote by $v_h$  the corresponding piecewise constant interpolant (see Fig.~\ref{fig:vh})
\begin{equation*}
v_h = \sum_{i=1}^N v(x_i) \ONE_{\gamma_i}.
\end{equation*}
For any $r$, $0 \leq r < 1/2$, one has
\begin{equation*}
\| v-v_h \|_{H^{r}} \leq C h^{1-r} \abs {v}_1
\end{equation*}
where $\abs {v}_1$ is the $H^1$ seminorm, and the fractional derivative Sobolev norm is defined by
\begin{equation*}
\| w \|_{H^{r}}  = \left(\int _I \abs{w}^2 \right)^{1/2} +\left (  \int_I \int_I \frac {\abs {w(y)-w(x)}^2}{\abs{y-x}^{1+2r}}\right ) ^{1/2},
\quad 0 < r < 1/2.
\end{equation*}

\end{lemma}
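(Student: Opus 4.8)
I would split the norm into its $L^2$ part and its Gagliardo seminorm part, and treat the seminorm through the translation (finite-difference) representation, which is the natural tool for fractional Sobolev norms below $1/2$.

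For the $L^2$ part, on each subinterval $\gamma_i$ one has $v_h\equiv v(x_i)$, so $v(y)-v_h(y)=v(y)-v(x_i)=\int_{x_i}^{y}v'$. By Cauchy--Schwarz and $\abs{\gamma_i}\le Ch$ this gives $\int_{\gamma_i}\abs{v-v_h}^2\le Ch^2\int_{\gamma_i}\abs{v'}^2$, and summing over $i$ yields $\norme{v-v_h}_{L^2}\le Ch\,\abs v_1$. Since $h\le h^{1-r}$ for $h\le 1$ and $0\le r<1/2$, this already dominates the first term of the norm. I would then set $w=v-v_h$ and rewrite the seminorm as
\begin{equation*}
\int_I\int_I\frac{\abs{w(y)-w(x)}^2}{\abs{y-x}^{1+2r}}\,dx\,dy
=\int_{-1}^{1}\frac{g(t)}{\abs t^{1+2r}}\,dt,\qquad
g(t)=\int_{I\cap(I-t)}\abs{w(x+t)-w(x)}^2\,dx,
\end{equation*}
and estimate $g$ in two regimes separated by the mesh size.

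For the far regime $\abs t> h$ I would use the crude bound $g(t)\le 4\norme{w}_{L^2}^2\le Ch^2\abs v_1^2$ from the $L^2$ estimate, so that $\int_{h<\abs t<1}\abs t^{-1-2r}g(t)\,dt\le Ch^2\abs v_1^2\int_h^1 t^{-1-2r}\,dt\le Ch^{2-2r}\abs v_1^2$, where the integral is controlled by its lower endpoint because $r>0$. The delicate regime is $\abs t\le h$, and the main obstacle is the contribution of the jumps of the piecewise constant interpolant. Here I would write $\Delta_t w=\Delta_t v-\Delta_t v_h$; the smooth part satisfies the standard bound $\norme{\Delta_t v}_{L^2}^2\le t^2\abs v_1^2\le h\abs t\,\abs v_1^2$. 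For $\Delta_t v_h$, note that $v_h(x+t)-v_h(x)$ vanishes unless the segment $(x,x+t)$ crosses a mesh node; by quasi-uniformity it then meets only a bounded number of cells, and the set of such $x$ has measure $O(\abs t)$ per node. On it the value is a difference $v(x_{i+1})-v(x_i)$ of adjacent nodal values, which obeys $\abs{v(x_{i+1})-v(x_i)}^2\le Ch\int_{\gamma_i\cup\gamma_{i+1}}\abs{v'}^2$ by Cauchy--Schwarz. Summing over the $O(1/h)$ nodes gives $\norme{\Delta_t v_h}_{L^2}^2\le Ch\abs t\,\abs v_1^2$, hence $g(t)\le Ch\abs t\,\abs v_1^2$ for $\abs t\le h$.

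Finally I would combine the two regimes: $\int_{\abs t\le h}\abs t^{-1-2r}g(t)\,dt\le Ch\abs v_1^2\int_0^h t^{-2r}\,dt\le Ch^{2-2r}\abs v_1^2$, where convergence of $\int_0^h t^{-2r}\,dt$ near the origin is exactly what forces the restriction $r<1/2$. Adding the two contributions yields $\abs{v-v_h}_{H^r}\le Ch^{1-r}\abs v_1$, and together with the $L^2$ bound this gives $\norme{v-v_h}_{H^r}\le Ch^{1-r}\abs v_1$; the endpoint $r=0$ reduces to the $L^2$ estimate. The estimate of the jump term for $\Delta_t v_h$, and the bookkeeping that keeps its $L^2$ norm of order $(h\abs t)^{1/2}$ for small $t$, is the step I expect to require the most care.
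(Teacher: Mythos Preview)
Your argument is correct, but it is organized differently from the paper's. Both proofs treat the $L^2$ part identically and then estimate the Gagliardo seminorm of $w=v-v_h$; the difference is in how that double integral is split. The paper decomposes $\int_I\int_I$ cell by cell into diagonal blocks $\gamma_i\times\gamma_i$ and off-diagonal blocks $\gamma_i\times\gamma_j$, $i\neq j$. On a diagonal block $v_h$ is constant, so $w(y)-w(x)=v(y)-v(x)$ and Cauchy--Schwarz plus integration of $\abs{y-x}^{-2r}$ over a square of side $h$ gives the $h^{2-2r}$ factor. On off-diagonal blocks the paper uses the pointwise bound $\abs{w_h(x)}^2\le h\int_{\gamma_i}\abs{v'}^2$ and evaluates $\sum_{j\neq i}\int_{\gamma_i}\int_{\gamma_j}\abs{y-x}^{-1-2r}$ directly. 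You instead pass to the variable $t=y-x$, split at $\abs t=h$, use the global $L^2$ bound for large $t$, and for small $t$ decompose $\Delta_t w=\Delta_t v-\Delta_t v_h$ and control the jump part of $\Delta_t v_h$ by counting crossed nodes.

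Both routes give the same order with constants behaving like $1/\bigl(r(1-2r)\bigr)$. The paper's cellwise splitting is shorter because the diagonal cancellation $v_h(y)-v_h(x)=0$ removes the need to track jumps of $v_h$ at all; your finite-difference route is the more portable one (it is the standard device for fractional norms and adapts readily to other interpolants or dimensions), at the cost of the extra bookkeeping you flagged for $\norme{\Delta_t v_h}_{L^2}$. Either argument is acceptable here.
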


\begin{figure}
\psfrag{x1}{$x_1$}
\psfrag{x2}{$x_2$}
\psfrag{xN}{$x_N$}
\centering
\includegraphics{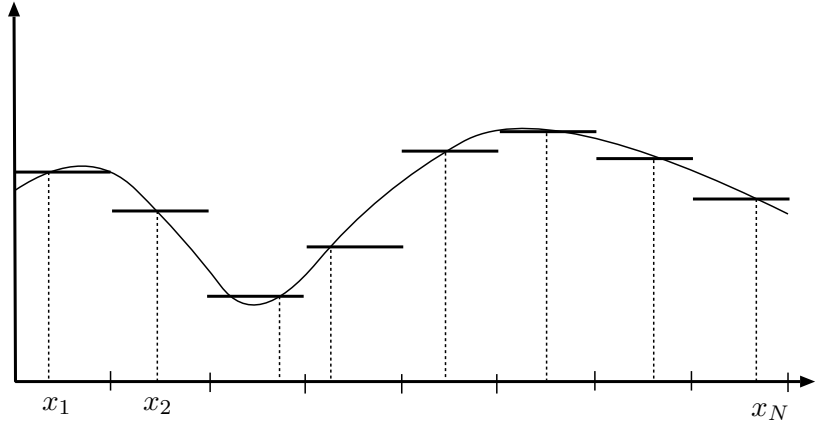}
\caption{$P^0$ interpolant}
\label{fig:vh}
\end{figure}

\begin{proof}
Let us start with the integral over $I\times I$. Setting $w_h = v-v_h$, one has
\begin{eqnarray*}
 \int_I \int_I \frac {\abs {w_h(y)-w_h(x)}^2}{\abs{y-x}^{1+2r}} &=&
 \sum_i \sum_j \int_{\gamma_i} \int_{\gamma_j}\frac {\abs {v(y)-v(x_j)-v(x)+v(x_i)}^2}{\abs{y-x}^{1+2r}}  \\
& =&  \sum_i  \int_{\gamma_i} \int_{\gamma_i}\frac {\abs {v(y)-v(x)}^2}{\abs{y-x}^{1+2r}} +
 \sum_i \sum_{j\neq i}  \int_{\gamma_i} \int_{\gamma_j} \frac {\abs {w_h(y)-w_h(x)}^2}{\abs{y-x}^{1+2r}}  \\
 &=& A + B.
\end{eqnarray*}
The first term (diagonal terms) writes
\begin{eqnarray*}
A = \sum_i  \int_{\gamma_i} \int_{\gamma_i}\frac {\abs {v(y)-v(x)}^2}{\abs{y-x}^{1+2r}}  & \leq & 
\sum_i  \int_{\gamma_i} \int_{\gamma_i}\frac {\int_{\gamma_i} \abs{v'(t)}^2\, dt}{\abs{y-x}^{2r}}  \\
&\leq &\frac 1 {(1-2r)} h^{2-2r}
\int_{I} \abs{v'(t)}^2.
\end{eqnarray*}
As for extradiagonal terms, let us first note that, on any $\gamma_i$, 
\begin{equation*}
\abs {w_h } ^2 = \abs {\int_{x_i}^x v'(t)\, dt } ^2  
\leq  h  \int_{\gamma_i }  \abs{v'(t)}^2.
\end{equation*}
The second term $B$ can then be estimated
\begin{eqnarray*}
B &  \leq& 4 \sum_i \sum_{j\neq i}  \int_{\gamma_i} \int_{\gamma_j} \frac {\abs {w_h(x)}^2}{\abs{y-x}^{1+2r}}  \\
&\leq & 4 h \sum_i   \int_{\gamma_i }  \abs{v'(t)}^2 \sum_{j\neq i}  \int_{\gamma_i} \int_{\gamma_j} \frac {1}{\abs{y-x}^{1+2r}} .
\end{eqnarray*}
In the case $r>0$, the last sum  above (over $j\neq i$) is less than twice
\begin{eqnarray*}
\int_{0}^h dx \int _h^{1} \frac {dy}{\abs{y-x}^{1+2r}}
& = & -\frac 1 {2r} \int_{0}^h dx  \left [(y-x)^{-2r}
\right ]_h^1
\leq 
\frac 1 {2r} \int_{0}^h   (h-x)^{-2r}\, dx \\
& \leq & \frac 1{2r(1-2r)} h^{1-2r},
\end{eqnarray*}
so that
\begin{equation*}
B \leq  \frac 4 {r(1-2r)}  h^{2-2r}  \int_{I}  \abs{v'(t)}^2.
\end{equation*}
Thus, one obtains
\begin{equation*}
 \int_I \int_I \frac {\abs {w_h(y)-w_h(x)}^2}{\abs{y-x}^{1+2r}}  \leq
 \frac C {r (1-2r)} h^{2-2r} \abs { v}^2_{1,I} .
\end{equation*}
The $0$-th order term is simply
\begin{equation*}
\int_I \abs { w_h}^2  = \sum_i \int_{\gamma_i} \abs { w_h}^2 \leq h^2 \abs { v}^2_{1,I} .
\end{equation*}
Hence the order $1-r$ of the lemma.
\qed
\end{proof}

\paragraph{Remarks}
\begin{itemize}
\item If $r = 0$, the $H^r$ norm is only the $L^2$ norm and the result comes from the $0$-th order term of the proof.

\item This result may be ruled out in the case $-1/2 < r < 0$. Indeed, even though the semi-norm part of the definition of the $H^r$ norm can be extended for $r<0$, it is not the case for the $L^2$ norm. One cannot get rid of this $L^2$ norm either, because the constant functions must be controlled in a way. Moreover, the integral over $I \times I$ is of order $h^{2-2r}$ even if $r < 0$, so it must also be true for the expression that replaces the $L^2$ norm and controls the constant functions. For instance, taking the average of the function is not sufficient because it is still a term of order $1$. That is why it may be that the order cannot go further than $1$, even for $r < 0$.
\end{itemize}
The  second lemma is a trace theorem for discrete functions.

\begin{lemma}
\label{lemma:trace}
Let $\OO$ be the unit square in  $\setR^2$, and $\omega\subset\subset \OO$ a smooth subdomain, with boundary  $\gamma$. Let $(T_h)_h$ be a regular  family of triangulations, and $V_h$ the associated $P^1$ Finite Element space. The trace operator $\tr$ maps $V_h$  (equipped with $H^1$ norm) onto $H^1(\gamma)$, with 
\begin{equation*}
\abs {\tr (v_h)}_{1,\gamma} \leq\frac  C {h^{1/2}} \abs {v_h}_{1,\OO} \, .
\end{equation*}

\end{lemma}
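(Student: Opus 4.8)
The plan is to reduce the estimate to a comparison, for the piecewise constant quantity $\abs{\nabla v_h}^2$, between a curvilinear integral over $\gamma$ and a volume integral over $\OO$. First I would observe that $\tr(v_h)$ is the restriction to $\gamma$ of a globally continuous, piecewise affine function; hence it is continuous on $\gamma$ and smooth on each arc cut out by a triangle, so it belongs to $H^1(\gamma)$. Writing $\tau$ for the unit tangent to $\gamma$, on each such arc the tangential derivative is $\partial_\tau \tr(v_h) = \nabla v_h \cdot \tau$, whence the pointwise bound $\abs{\partial_\tau \tr(v_h)} \leq \abs{\nabla v_h}$ a.e. on $\gamma$. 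Integrating gives
\[
\abs{\tr(v_h)}_{1,\gamma}^2 = \int_\gamma \abs{\partial_\tau \tr(v_h)}^2 \leq \int_\gamma \abs{\nabla v_h}^2 ,
\]
so it suffices to bound $\int_\gamma \abs{\nabla v_h}^2$ by $C h^{-1}\abs{v_h}_{1,\OO}^2$.

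Next, since $\nabla v_h$ is constant on each triangle $K$, both integrals decompose over the triangles meeting $\gamma$:
\[
\int_\gamma \abs{\nabla v_h}^2 = \sum_{K \cap \gamma \neq \emptyset} \abs{{\nabla v_h}_{|K}}^2 \, \abs{\gamma \cap K}, \qquad \int_\OO \abs{\nabla v_h}^2 = \sum_{K} \abs{{\nabla v_h}_{|K}}^2 \, \abs{K},
\]
where $\abs{\gamma\cap K}$ denotes the length of the arc $\gamma \cap K$ and $\abs{K}$ the area of $K$. The estimate follows at once if I can show that, for every triangle $K$ meeting $\gamma$,
\[
\frac{\abs{\gamma \cap K}}{\abs{K}} \leq \frac{C}{h} .
\]
The denominator is controlled by the mesh assumptions: quasi-uniform regularity yields $\abs{K} \geq c h^2$. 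The numerator requires the geometric input $\abs{\gamma \cap K} \leq C h$.

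The main obstacle is precisely this last geometric bound. A priori a smooth curve could meet a small triangle in several components, or in a long wiggling arc; to rule this out I would use the smoothness of $\gamma$, which provides a positive lower bound on its radius of curvature (equivalently a positive reach). For $h$ smaller than a fixed fraction of this radius, each triangle of diameter at most $Ch$ is crossed by $\gamma$ in a single arc that stays close to a chord, so its length is at most $C h$; one can make this quantitative by parametrizing $\gamma$ by arc length and using that the curvature is bounded. Once $\abs{\gamma\cap K} \leq Ch$ is established, combining with $\abs{K}\geq c h^2$ gives the ratio bound above, and summing over the (finitely many) triangles meeting $\gamma$ yields $\int_\gamma \abs{\nabla v_h}^2 \leq C h^{-1} \int_\OO \abs{\nabla v_h}^2$; taking square roots concludes.
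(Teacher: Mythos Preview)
Your argument is correct and follows essentially the same route as the paper: both exploit that $\nabla v_h$ is constant on each triangle to write $\int_\gamma\abs{\nabla v_h}^2$ as a sum over triangles meeting $\gamma$, invoke the curvature bound $\abs{\gamma\cap K}\leq Ch$ (for $h$ small relative to the radius of curvature), and compare with the area contribution $\abs{K}\sim h^2$. Your version is in fact slightly more careful, since you justify the inequality $\abs{\partial_\tau \tr(v_h)}\leq\abs{\nabla v_h}$ via the tangential derivative rather than writing $\abs{\tr(v_h)}_{1,\gamma}^2=\int_\gamma\abs{\nabla v_h}^2$ as an equality.
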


\begin{proof}
Because $v_h$ is a $P^1$ function, its gradient is constant in every mesh cell of $T_h$. Let $Q_\gamma$ be the set of all the mesh cells which have a non empty intersection with $\gamma$, one has
\begin{eqnarray*}
\abs {\tr (v_h)}^2_{1,\gamma} = \int_\gamma \abs{\nabla v_h}^2 & = & \sum_{Q \in Q_\gamma} \abs{ \nabla v_h}^2_{\infty, Q} \abs{Q \cap \gamma} \\
& = & \frac{1}{h^2} \sum_{Q \in Q_\gamma} \abs{v_h}^2_{1, Q} \abs{Q \cap \gamma} \, .
\end{eqnarray*}

Assuming that the radius of curvature of $\gamma$ is greater than $h$, the following inequality holds,
\begin{equation*}
\abs{Q \cap \gamma} \leq C h \, ,
\end{equation*}
where $C$ is a constant independent of h. Thus, the previous equality writes,
\begin{eqnarray*}
\abs {\tr (v_h)}^2_{1,\gamma} & \leq & \frac{C}{h} \sum_{Q \in Q_\gamma} \abs{v_h}^2_{1, Q} \\
& \leq & \frac{C}{h} \abs{v_h}^2_{1, \OO} \, ,
\end{eqnarray*}
which is the estimation of the lemma.
\qed
\end{proof}
The next proposition is the approximation error of a single layer distribution by a combination of Dirac masses.

\begin{prop}
\label{prop:phiphih}
Let $\OO$ be the unit square in  $\setR^2$, and $\omega\subset\subset \OO$ a smooth subdomain, with boundary  $\gamma$. Let $(T_h)_h$ be a regular  family of triangulations, and $V_h$ the associated $P^1$ Finite Element space. Let $0 < s \leq 1/2$ and $\varphi \in H^{-1/2+s}(\gamma)$. 

Let $(S_\HH)$ be a family of quasi-uniform triangulations of $\gamma$. We shall represent $S_\HH$ by its subintervals  $\gamma_1$, \dots, $\gamma_{N_\HH}$.

We now consider $x_1$, \dots, $x_{N_\HH}$, with $x_i\in \gamma_i$ and the following approximation of $\varphi$,
\begin{equation}
\label{def:varphih}
\varphi_h^\HH = \sum_{i=1}^{N_\HH} \lambda_i \delta_{x_i}
\end{equation}
where the real numbers $\lambda_i$ are chosen as follows,
\begin{equation}
\label{def:lambdai}
\lambda_i = \left< \varphi , \mathds{1}_{\gamma_i} \right> \,.
\end{equation}
There exists a constant $C > 0$ such that :
\begin{equation}
\label{ErrorEstimate}
\left| \left< \varphi , v_h \right> -  \left< \varphi_h^\HH , v_h \right> \right| \leq C ~ \sqrt{\frac{\HH}{h}}  ~ \HH^{s} ~  \| \varphi \|_{-1/2+s,\gamma} | v_h |_{1,\Omega}
\end{equation}
for all function $v_h$ in $V_h$ and where $\left< ~ . ~ , ~ . ~ \right>$ denotes the dual pairing between $H^{-1/2+s}(\gamma)$ and $H^{1/2-s}(\gamma)$. 
\end{prop}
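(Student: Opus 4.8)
The plan is to reduce the whole estimate to the two preceding lemmas by observing that testing the discrete right-hand side $\varphi_h^\HH$ against $v_h$ is the same as testing $\varphi$ against the piecewise constant interpolant, along $\gamma$, of the trace of $v_h$. First I would exploit the choice \eqref{def:lambdai} of the weights. Since
\[
\langle \varphi_h^\HH , v_h \rangle = \sum_{i=1}^{N_\HH} \lambda_i\, v_h(x_i) = \sum_{i=1}^{N_\HH} v_h(x_i)\, \langle \varphi , \ONE_{\gamma_i} \rangle ,
\]
linearity of the pairing gives $\langle \varphi_h^\HH , v_h \rangle = \left\langle \varphi , \sum_{i} v_h(x_i)\ONE_{\gamma_i} \right\rangle$. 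Writing $w = \tr(v_h)$ for the trace of $v_h$ on $\gamma$ and $w_\HH = \sum_i w(x_i)\ONE_{\gamma_i}$ for its piecewise constant interpolant on $S_\HH$ (legitimate because $v_h$ is continuous, so $v_h(x_i)=w(x_i)$ at the points $x_i \in \gamma_i$), the quantity to estimate collapses to a single pairing:
\[
\langle \varphi , v_h \rangle - \langle \varphi_h^\HH , v_h \rangle = \langle \varphi , w - w_\HH \rangle .
\]

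Next I would apply the duality bound between $H^{-1/2+s}(\gamma)$ and $H^{1/2-s}(\gamma)$,
\[
\abs{\langle \varphi , w - w_\HH \rangle} \leq \| \varphi \|_{-1/2+s,\gamma}\, \| w - w_\HH \|_{1/2-s,\gamma},
\]
and then invoke Lemma~\ref{lemma:Hs} with $r = 1/2 - s$ and mesh parameter $\HH$. The hypothesis $0 < s \leq 1/2$ places $r$ exactly in the admissible range $0 \leq r < 1/2$, and the lemma yields $\| w - w_\HH \|_{1/2-s,\gamma} \leq C\, \HH^{1-r}\, \abs{w}_{1,\gamma} = C\, \HH^{1/2+s}\, \abs{w}_{1,\gamma}$.

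Finally I would control the $H^1$ seminorm of the trace by Lemma~\ref{lemma:trace}, namely $\abs{w}_{1,\gamma} = \abs{\tr(v_h)}_{1,\gamma} \leq C\, h^{-1/2}\, \abs{v_h}_{1,\OO}$. Chaining the three inequalities and rewriting $\HH^{1/2+s}/h^{1/2} = \sqrt{\HH/h}\,\HH^{s}$ reproduces \eqref{ErrorEstimate}. Each step is short once the reduction is in place, so the genuine content sits in the first paragraph: the identity that converts the Dirac-mass error into an interpolation error on $\gamma$.

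Two technical points deserve care, and I expect the second to be the only real obstacle. First, the weight $\lambda_i = \langle \varphi , \ONE_{\gamma_i} \rangle$ must be well defined, which holds because $\ONE_{\gamma_i} \in H^{1/2-s}(\gamma)$ for $s>0$ (indicators of subintervals lie in $H^r$ precisely for $r<1/2$); this is exactly why the borderline case $s=0$ is excluded from the statement. Second, Lemma~\ref{lemma:Hs} is phrased on the unit interval $I$, whereas $\gamma$ is a smooth closed curve; I would transport the estimate to $\gamma$ through an arc-length parametrization, under which the quasi-uniform family $S_\HH$ becomes a quasi-uniform partition and the $H^r$ norm is equivalent to the one on $I$, so the bound carries over with an adjusted constant. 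One should also record at the outset that $\tr(v_h) \in H^1(\gamma)$ — the trace of a continuous $P^1$ field is piecewise affine along $\gamma$ with bounded tangential derivative — which is precisely what makes both lemmas applicable to $w$.
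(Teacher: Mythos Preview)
Your proposal is correct and follows essentially the same route as the paper: reduce the error to $\langle \varphi, w - w_\HH\rangle$ via the choice of the $\lambda_i$, apply the $H^{-1/2+s}$--$H^{1/2-s}$ duality, invoke Lemma~\ref{lemma:Hs} with $r=1/2-s$ (transported to $\gamma$ by arc-length parametrization), and finish with the discrete trace Lemma~\ref{lemma:trace}. The paper carries out the arc-length change of variables explicitly and tracks the $\abs{\gamma}$ factors, but this only affects the constant; your side remarks on why $\ONE_{\gamma_i}\in H^{1/2-s}(\gamma)$ for $s>0$ and why $\tr(v_h)\in H^1(\gamma)$ are apt and match the paper's implicit use of these facts.
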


\begin{proof}

Thanks to the definition of $\varphi_h^\HH$, one has, 
\begin{eqnarray*}
\left| \left< \varphi , v_h \right> -  \left< \varphi_h^\HH , v_h \right> \right|  &=& \left| \left< \varphi , v_h \right> - \left< \varphi , \sum_{i=1}^{N_\HH} v_h(x_i) \mathds{1}_{\gamma_i} \right> \right| \\
& \leq & \| \varphi \|_{-1/2+s,\gamma} \|v_h - \sum_{i=1}^{N_\HH} v_h(x_i) \mathds{1}_{\gamma_i} \|_{1/2-s,\gamma} \, .
\end{eqnarray*}
The function $\tilde{v}_\HH$ defined by 
\begin{equation*}
\tilde{v}_\HH = \sum_{i=1}^{N_\HH} v_h(x_i) \mathds{1}_{\gamma_i}
\end{equation*}
is the piecewise constant interpolant of $v_h$ on $\gamma$. Now, in order to use lemma \ref{lemma:Hs} we perform a change of variable to map the curve $\gamma$ onto $\left[0, 1\right]$,
\begin{equation*}
\begin{array}{lccc}
X :  & \left[0, 1\right] & \longrightarrow & \gamma \\
& t & \longrightarrow & X(t) \, .
\end{array}
\end{equation*}
The variable $t$ here is the arc length so $X(t)$ is the point on $\gamma$ such that the length of the arc $X(0)X(t)$ is equal to $\abs{\gamma} t$. Thus, denoting by $w^\HH_h$ the function $v_h - \tilde{v}_\HH$, one has,
\begin{eqnarray*}
\abs{w^\HH_h}^2_{1/2 - s, \gamma} & = & \int_\gamma \int_\gamma \frac{\abs{w^\HH_h(y) - w^\HH_h(x)}^2}{\abs{y - x}^{2 - 2s}} \\
& = & \int_0^1 \int_0^1 \frac{\abs{w^\HH_h\circ X (t) - w^\HH_h \circ X (\tau)}^2}{\abs{X(t) - X(\tau)}^{2 - 2s}} X'(t) X'(\tau) dt d\tau \, .
\end{eqnarray*}
For any $t$ in $[0, 1]$, $X'(t)$ is equal to $\abs{\gamma}$ so the change of variables writes,
\begin{equation*}
\abs{w^\HH_h}_{1/2 - s, \gamma} = \abs{\gamma} \abs{w^\HH_h \circ X}_{1/2 - s, [0, 1]} \, .
\end{equation*}
The same goes for the $L^2$ norm. Now, the length of the subintervals $\gamma_i$ of $\gamma$ is $\HH$, so the length of the corresponding subintervals in $[0, 1]$ is $\HH/\abs{\gamma}$.
We apply the lemma \ref{lemma:Hs} with $r = 1/2 - s$ and $h = \HH / \abs{\gamma}$,
\begin{eqnarray*}
\norme{v_h - \tilde{v}_\HH}_{1/2 - s, \gamma} & = & \abs{\gamma} \|v_h \circ X - \tilde{v}_\HH \circ X \|_{1/2-s, [0, 1]} \\
& \leq & \frac{\abs{\gamma} C}{\abs{\gamma}^{1/2 + s}}~ \HH^{1/2 + s} \abs{v_h \circ X}_{1, [0, 1]} \, .
\end{eqnarray*}
Again, with the change of variables, one has,
\begin{equation*}
\norme{v_h - \tilde{v}_\HH}_{1/2 - s, \gamma} \leq \frac{C}{\abs{\gamma}^{1/2 + s}} ~ \HH^{1/2 + s} \abs{v_h}_{1, \gamma} \, .
\end{equation*}

Now, the trace lemma \ref{lemma:trace} allows us to extend the $H^1$ semi-norm on $\gamma$ of $v_h$ over the whole domain $\OO$,
\begin{equation*}
\|v_h - \tilde{v}_\HH \|_{1/2-s,\gamma} \leq \frac{C}{\abs{\gamma}^{1/2 + s}} ~ \frac{\HH^{1/2 + s}}{h^{1/2}} \abs{v_h}_{1, \OO} \, ,
\end{equation*}
which ends the proof.
\qed
\end{proof}

\paragraph{Remark 1} :
If $1/2 < s \leq 1$, one cannot expect to have a better order than the one with $s=1/2$. Indeed, here is a one dimensional example where $\varphi$ is a smooth function and the error is still of order $1$. 

The domain is $(0, \, 1)$ and $\varphi$ is the constant function  $1$. We take $h = \HH/2$ and the function $v_h$ is defined by 
\begin{equation*}
v_h(x) = 
\left\{
\begin{array}{ll}
\vspace{0.2cm}
-\displaystyle{\frac{\abs{x-\HH/2}}{h}} + 1 & \text{ in } [0, \, \HH] \\
0 & \text{ elsewhere.}
\end{array}
\right.
\end{equation*}
If we take the point $\HH/2$ to approximate $\varphi$ in $[0,\,  \HH]$, the error writes,
\begin{equation*}
\abs{\left< \varphi, \, v_h \right> - \left< \varphi_h^\HH, \, v_h \right>} = \abs{\frac{\HH}{2} - \left(\int_0^\HH \varphi\right) v_h\left(\frac{\HH}{2}\right)} = \frac{\HH}{2} \, .
\end{equation*}
Thus the error is of order one and there is a saturation of the order once $s$ is greater than $0.5$.

\paragraph{Remark 2} : In the case $s=0$ the definition of the approximation of the single layer distribution is different because the characteristic functions are not in $H^{1/2}$. Nevertheless, these characteristic functions can be replaced by a partition of unity and the result of Proposition \ref{prop:phiphih} holds. Thus, there is still convergence if $\HH$ is such that $\HH/h$ goes to $0$ as $h$ goes to $0$.

\section{Applications}
\label{sec:2}
\subsection{Error estimate for the Poisson problem}
\label{subsec:21}
We consider here a Poisson problem with homogeneous Dirichlet boundary conditions and a single layer distribution as a right-hand side. We are interested in the error estimate when the single layer distribution is approximated by a combination of Dirac masses.

\begin{prop}
\label{prop:poisson}
Let $\OO$ be the unit square and $\omega \subset\subset \OO$ a smooth subdomain, with boundary  $\gamma$. We consider the following Poisson problem,
\begin{equation*}
\left|
\begin{array}{rcll}
- \Delta u & = & \varphi \delta_\gamma & \mbox{ in } \OO\vspace{3mm} \\
u & = & 0 & \mbox{ on } \partial \OO
\end{array}
\right.
\end{equation*}
where $\varphi$ is in $H^{-1/2 + s}(\gamma)$, $0 < s \leq 1/2$.

Let $(T_h)_h$ be a regular family of triangulations and $V_h$ the associated $P^1$ Finite Element space. We approximate $\varphi$ by a function $\varphi_\HH^h$ according to Eqs.~(\ref{def:varphih}) and~(\ref{def:lambdai}). Denoting by $u_h$ the finite element solution of $u$ and assuming that $\HH$ is of the  order of  $h$,  the error estimate is
\begin{equation*}
\| u - u_h \|_1  \leq C \left( \sqrt{h} | u |_2  + h^s \| \varphi \|_{-1/2 + s, \gamma} \right) \,,
\end{equation*}
where $|.|_2$ denotes the $H^2$ semi-norm.
\end{prop}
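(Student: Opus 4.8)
The plan is to run a Strang second–lemma argument, taking advantage of the fact that the bilinear form $a(u,v)=\int_\OO \nabla u\cdot\nabla v$ is \emph{identical} at the continuous and discrete levels; the only discrepancy between the two problems is the replacement of $\varphi\delta_\gamma$ by the Dirac combination $\varphi_h^\HH$ in the right-hand side. First I would record the orthogonality-with-defect relation: since $V_h\subset H^1_0(\OO)$, both $u$ and $u_h$ may be tested against any $w_h\in V_h$, so that $a(u-u_h,w_h)=\left< \varphi , w_h \right> - \left< \varphi_h^\HH , w_h \right>$ for all $w_h\in V_h$. Writing $\abs{u-u_h}_{1,\OO}^2 = a(u-u_h,u-v_h)+a(u-u_h,v_h-u_h)$ for an arbitrary $v_h\in V_h$, bounding the first term by Cauchy–Schwarz and the second by the defect applied to $v_h-u_h\in V_h$, and absorbing with Young's inequality, one obtains the abstract estimate
\[
\abs{u-u_h}_{1,\OO}\ \leq\ C\Big(\inf_{v_h\in V_h}\abs{u-v_h}_{1,\OO}\ +\ \sup_{w_h\in V_h}\frac{\abs{\left< \varphi , w_h \right> - \left< \varphi_h^\HH , w_h \right>}}{\abs{w_h}_{1,\OO}}\Big).
\]

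The consistency (second) term is handled directly by Proposition~\ref{prop:phiphih}, which bounds the numerator by $C\sqrt{\HH/h}\,\HH^{s}\,\norme{\varphi}_{-1/2+s,\gamma}\,\abs{w_h}_{1,\OO}$; dividing by $\abs{w_h}_{1,\OO}$ and inserting the hypothesis that $\HH$ is of the order of $h$ collapses $\sqrt{\HH/h}$ to a constant and $\HH^{s}$ to $h^{s}$, yielding the contribution $C\,h^{s}\,\norme{\varphi}_{-1/2+s,\gamma}$. This is the routine half of the proof.

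The genuinely delicate term is the approximation defect $\inf_{v_h}\abs{u-v_h}_{1,\OO}$, and I expect this to be the main obstacle. The point is that $u$ is harmonic on each side of $\gamma$ but carries a jump of its normal derivative across $\gamma$ (of size governed by $\varphi$), so it is only \emph{piecewise} $H^2$ — smooth inside $\omega$ and inside $\OO\setminus\overline{\omega}$ separately — while the mesh $T_h$ is in general \emph{not} fitted to $\gamma$. I would take $v_h=I_h u$, the $P^1$ nodal interpolant, which is well defined because $u$ is continuous, and split the cells of $T_h$ into those disjoint from $\gamma$ and the $O(1/h)$ cells cut by $\gamma$. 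On the uncut cells $u$ is $H^2$, so Bramble–Hilbert gives $\abs{u-I_hu}_{1,K}\leq C h\,\abs{u}_{2,K}$, whose squares sum to $C h^{2}\abs{u}_{2}^{2}$. On a cut cell the $H^2$ estimate is unavailable: the uncaptured gradient jump forces an $O(h)$ contribution to the \emph{squared} seminorm on that cell (the one-dimensional model $u(x)=\abs{x}$ interpolated on a symmetric interval already exhibits $\abs{u-I_hu}_{1}^{2}=O(h)$). Summing $O(1/h)$ such cells, each contributing $O(h)$ times local piecewise-$H^2$ data, produces $\abs{u-I_hu}_{1,\OO}^{2}\leq C\,h\,\abs{u}_{2}^{2}$, i.e. the $\sqrt{h}$ rate, where $\abs{u}_{2}$ must be read as the broken ($\omega$ and $\OO\setminus\overline{\omega}$) $H^2$ seminorm. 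Establishing this unfitted-interface interpolation estimate cleanly — in particular the per-cell bound on cut cells and the geometric count of cut cells under the regularity of $(T_h)$ and the curvature control on $\gamma$ already used in Lemma~\ref{lemma:trace} — is the technical heart of the argument.

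Finally I would combine the two contributions in the abstract estimate and invoke the Poincaré inequality (valid since $u-u_h\in H^1_0(\OO)$) to pass from the $H^1$ seminorm to the full norm $\norme{\cdot}_{1}$, arriving at $\norme{u-u_h}_{1}\leq C\big(\sqrt{h}\,\abs{u}_{2}+h^{s}\norme{\varphi}_{-1/2+s,\gamma}\big)$, which is the claimed bound.
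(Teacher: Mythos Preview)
Your proposal is correct and follows essentially the same route as the paper: a Strang-type splitting into an approximation term and a consistency term, the latter handled directly by Proposition~\ref{prop:phiphih} and then simplified under $\HH\sim h$. The paper's own proof simply invokes Strang's lemma and asserts the $\sqrt{h}$ order for the approximation term with the phrase ``the first part is the usual error and is of order $1/2$ in our case because $T_h$ is a non conformal mesh''; your cut-cell/uncut-cell counting argument for $\inf_{v_h}\abs{u-v_h}_{1,\OO}\leq C\sqrt{h}\,\abs{u}_{2}$ is exactly the justification the paper elides, so you have in fact supplied more detail than the original.
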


\begin{proof}
Thanks to Strang's lemma (see \cite{CiarletEF}), the error estimate is,
\begin{equation*}
\| u - u_h \|  \leq C  \left( \inf_{v_h \in V_h} \| u-v_h \| +  \sup_{w_h \in V_h} \frac{| \left< \varphi, w_h \right> - \left< \varphi^\HH_h , w_h \right> |}{\| w_h \|} \right) \, .
\end{equation*}

The first part is the usual error and is of order $1/2$ in our case because $T_h$ is a non conformal mesh and $V_h$ is the Finite Element space of $P^1$ function.
The second part is given by Proposition \ref{prop:phiphih}. Thus, the error estimate writes,
\begin{equation*}
\| u - u_h \|_1  \leq C \left( \sqrt{h} | u |_2  + \sqrt{\frac{\HH}{h}} ~ \HH^s \| \varphi \|_{-1/2 + s, \gamma} \right) \, .
\end{equation*}
In particular, if $\HH$ is equal to $h$, the error estimate becomes,
\begin{equation*}
\| u - u_h \|_1  \leq C \left( \sqrt{h}| u |_2  + h^s \| \varphi \|_{-1/2 + s, \gamma} \right) \, ,
\end{equation*}
which ends the proof.
\qed
\end{proof}

\subsection{Error estimate for the approximation of the solution of a saddle-point problem}
\label{subsec:22}
We consider a Poisson equation in a domain with a hole and homogeneous Dirichlet boundary conditions on each boundary. The homogeneous condition on the boundary of the hole is enforced by Lagrange multipliers defined on the boundary. These Lagrange multipliers are approximated by a combination of Dirac masses. First we prove an abstract theorem on the error approximation when the discrete Lagrange multipliers space is not included in the continuous Lagrange multipliers space. Second, we use this theorem and Proposition \ref{prop:phiphih} to give an error estimate of the Poisson problem.

\begin{prop}
\label{prop:abstract_theo}
Let V be a Hilbert space, $a$ be a bounded elliptic bilinear form in $V$ and $f$ be in $V'$. Let $\Lambda$ be another Hilbert space, $B\in \mathcal{L}(V,\Lambda)$ and $K$ the kernel of $B$. We consider the problem of finding $u$ in  $K$ such that $a(u, v) = \left< f, \, v \right>$ for all $v$ in $K$, and its sadle-point formulation
\begin{equation*}
\begin{array}{lcll}
a(u, v) + \left< B^* \lambda, \, v \right> & = & \left< f ,\, v \right> & \forall v \in V \\
\left(\mu, B u \right) & = & 0 & \forall \mu \in \Lambda \, .
\end{array}
\end{equation*}
Let $V_h$ be a finite dimensional subspace of $V$ and $\Lambda_\HH$ be a finite dimensional space, not necessarily included in $\Lambda$. Let $B_\HH^h$ be a bounded linear application from $V$ to $\Lambda_\HH$, we denote by $K_\HH^h$ the approximation of $K$
\begin{equation*}
K_\HH^h = \left\{v_h \in V_h, ~ \left( (B_\HH^h)^* v_h, \, \mu_\HH \right) = 0 ~~ \forall \mu_\HH \in \Lambda_\HH \right\} \, .
\end{equation*}
The approximate saddle-point problem is
\begin{equation*}
\begin{array}{lcll}
a(u_h, v_h) + \left< (B_\HH^h)^* \lambda_\HH, \, v_h \right> & = & \left< f ,\, v_h \right> & \forall v_h \in V_h \\
\left(\mu_\HH, B_\HH^h \, u_h \right) & = & 0 & \forall \mu_\HH \in \Lambda_\HH \, .
\end{array}
\end{equation*}
We have the following error estimate
\begin{equation*}
\| u - u_h \|_V \leq \left( 1 + \frac{\| a \|}{\alpha} \right) \inf_{w_h \in K_\HH^h} \| w_h - u \| + \frac{1}{\alpha} \inf_{\mu_\HH \in \Lambda_\HH}{\|\xi - \left(B^h_\HH\right)^* \mu_\HH \|}_{V'}
\end{equation*}
where $\alpha$ is the coercivity constant of a and  $\xi$ is the linear form in $V$ defined by 
\begin{equation*}
a(u,v) + \left< \xi , v \right> = \left< f , v \right> \quad \forall v \in V \, .
\end{equation*}

\end{prop}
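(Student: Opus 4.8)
The plan is to follow the pattern of the second Strang lemma for mixed problems: use coercivity of $a$ on the discrete kernel $K_\HH^h$, and absorb the non-conformity (the fact that $\Lambda_\HH \not\subset \Lambda$ and $B_\HH^h \neq B$) into the consistency term $\inf_{\mu_\HH}\|\xi - (B_\HH^h)^*\mu_\HH\|_{V'}$. The starting observation is that the second equation of the approximate saddle-point problem states precisely that $u_h \in K_\HH^h$; since $K_\HH^h$ is a linear subspace, $u_h - w_h \in K_\HH^h$ for every $w_h \in K_\HH^h$. Moreover, testing the first approximate equation against any $v_h \in K_\HH^h$ annihilates the multiplier term $\langle (B_\HH^h)^*\lambda_\HH,\, v_h\rangle = (\lambda_\HH,\, B_\HH^h v_h) = 0$, so that the discrete problem restricted to $K_\HH^h$ reduces to the Galerkin-type relation $a(u_h, v_h) = \langle f, v_h\rangle$ for all $v_h \in K_\HH^h$.

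First I would fix $w_h \in K_\HH^h$ and apply coercivity to $u_h - w_h$, namely $\alpha\|u_h - w_h\|^2 \leq a(u_h - w_h,\, u_h - w_h)$. Rewriting $a(u_h,\, u_h - w_h)$ through the reduced discrete equation as $\langle f,\, u_h - w_h\rangle$, and then eliminating $f$ by the defining identity of $\xi$, i.e.\ $\langle f, v\rangle = a(u, v) + \langle \xi, v\rangle$ for all $v \in V$, I obtain
\[
\alpha\|u_h - w_h\|^2 \leq a(u - w_h,\, u_h - w_h) + \langle \xi,\, u_h - w_h\rangle.
\]

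The crucial step is the treatment of the term $\langle \xi,\, u_h - w_h\rangle$, and this is where the non-conformity is handled. Because $u_h - w_h \in K_\HH^h$, one has $\langle (B_\HH^h)^*\mu_\HH,\, u_h - w_h\rangle = (\mu_\HH,\, B_\HH^h(u_h - w_h)) = 0$ for every $\mu_\HH \in \Lambda_\HH$; hence $\langle \xi,\, u_h - w_h\rangle = \langle \xi - (B_\HH^h)^*\mu_\HH,\, u_h - w_h\rangle$ for an \emph{arbitrary} $\mu_\HH$. In a conforming setting one would use $\xi = B^*\lambda$ together with a genuine multiplier; here instead we are free to subtract any element of the range of $(B_\HH^h)^*$, which produces the consistency term. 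Bounding the first term by continuity of $a$ and the second by the $V'$-norm, then dividing by $\|u_h - w_h\|$, yields
\[
\|u_h - w_h\| \leq \frac{\|a\|}{\alpha}\|u - w_h\| + \frac{1}{\alpha}\|\xi - (B_\HH^h)^*\mu_\HH\|_{V'}.
\]

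Finally I would conclude by the triangle inequality $\|u - u_h\| \leq \|u - w_h\| + \|w_h - u_h\|$, which generates the factor $1 + \|a\|/\alpha$ in front of $\|u - w_h\|$. Since the resulting bound holds for every $w_h \in K_\HH^h$ and every $\mu_\HH \in \Lambda_\HH$, and since the two right-hand terms involve these variables separately, the double infimum splits into the sum of the two independent infima stated in the proposition. I do not anticipate any serious obstacle: the argument is a routine coercivity-plus-consistency estimate. The only point genuinely requiring care is to verify that both orthogonality properties of $K_\HH^h$ — cancellation of the discrete multiplier term against $f$, and cancellation of any $(B_\HH^h)^*\mu_\HH$ — apply simultaneously to $u_h - w_h$; both follow from $u_h - w_h$ lying in the discrete kernel $\ker B_\HH^h$, which is precisely the structural fact that makes the estimate work despite $\Lambda_\HH$ being non-conforming.
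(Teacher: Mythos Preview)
Your proposal is correct and follows essentially the same route as the paper's proof: restrict the discrete problem to $K_\HH^h$ to obtain $a(u_h,v_h)=\langle f,v_h\rangle$, use coercivity on $u_h-w_h$, replace $f$ via the definition of $\xi$, exploit $u_h-w_h\in K_\HH^h$ to insert an arbitrary $(B_\HH^h)^*\mu_\HH$, and finish with the triangle inequality. The only difference is expository: you spell out explicitly why the two cancellations both stem from $u_h-w_h\in\ker B_\HH^h$, which the paper leaves implicit.
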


\begin{proof}
$u_h$ is the solution of the approximate saddle-point problem, so we have,
\begin{equation*}
a(u_h,v_h) = \left< f , v_h \right> \quad \forall v_h \in K_\HH^h \, .
\end{equation*}
For all $w_h \in K_\HH^h$, we write $v_h = u_h - w_h \in K_\HH^h$. Thus,
\begin{eqnarray*}
a(v_h , v_h ) & = & a(u_h - w_h , v_h ) \\
& = & \left< f , v_h \right> - a(w_h, v_h) \, .
\end{eqnarray*}
As
$u$ is the solution of our problem we have,
\begin{equation*}
a(u, v_h) + \left< \xi , v_h \right> = \left< f , v_h \right> \, .
\end{equation*}
Thus,  for any $\mu_\HH$ in $\Lambda_\HH$, one has,
\begin{eqnarray*}
a(v_h,v_h) & = & a(u,v_h) + \left< \xi , v_h \right> - a(w_h,v_h) - ( B^h_\HH v_h , \mu_\HH )\\
& = & a(u - w_h , v_h) + \left< \xi - \left(B^h_\HH\right)^* \mu_\HH , v_h \right> \, .
\end{eqnarray*}
Taking the absolute value of the previous equality gives us
\begin{equation*}
\alpha \| v_h \|^2 \leq \| a \| \| u - w_h \| \|v_h \| + \| \xi - \left(B^h_\HH\right)^* \mu_\HH\|_{V'} \|v_h \|  \, .
\end{equation*}
But $v_h = u_h - w_h$ so we have
\begin{equation*}
\alpha \| u_h - w_h \| \leq \| a \| \|u - w_h \| + \| \xi - \left(B^h_\HH\right)^* \mu_\HH \|_{V'}  \, .
\end{equation*}
Thus,
\begin{eqnarray*}
\|u-u_h \| & \leq & \|u - w_h \| + \|w_h - u_h \| \\
& \leq & \left(1 + \frac{\| a \|}{\alpha} \right) \|w_h - u\| + \frac{1}{\alpha} \| \xi - \left(B^h_\HH\right)^* \mu_\HH\|_{V'} \quad \forall w_h \in K_\HH^h~, \mu_\HH \in \Lambda_\HH \,,
\end{eqnarray*}
which concludes the proof.
\qed
\end{proof}

Now we define the problem we are interested in. Let $\Omega$ be a bounded domain in $\mathbb{R}^2$ and $\mathcal{O}$ a non empty open subset of $\Omega$. We denote by $\mathcal{P}$ the following problem
\begin{equation*}
\left(\mathcal{P}\right) \left|
\begin{array}{rcll}
- \Delta u & = & f & \mbox{ in } \Omega \setminus \mathcal{O} \\
u & = & 0 & \mbox{ on } \gamma \\ 
u & = & 0 & \mbox{ on } \partial \Omega \, ,
\end{array}
\right.
\end{equation*}
where $f$ is a function of $L^2(\Omega \setminus \mathcal{O})$ and $\gamma$ denotes the boundary of $\mathcal{O}$. We use a fictitious domain method and extend the function $f$ by $0$ in $\mathcal{O}$. We still denote by $f$ this extension and we denote by $V$ the space $H^1_0(\Omega)$ and by $K$ the constrained space,
\begin{equation*} 
K = \left\{ v \in V ; ~ v_{|_\gamma} = 0 \right\} \, .
\end{equation*}

The Lagrange multipliers space is $\Lambda = L^2_0(\gamma)$, which is the space of $L^2$ functions on $\gamma$ with zero mean value. Let $B$ be the following application,
\begin{equation*}
\begin{array}{lccc}
B : & V & \longrightarrow & \Lambda \\
& v &  \longrightarrow & v_{|\gamma} \, .
\end{array}
\end{equation*}
We have $K = \ker B$ and the corresponding saddle-point problem is:

Find $(u,\lambda) \in V \times \Lambda$, such that
\begin{equation*}
\begin{array}{rcll}
\vspace{0.2cm}
\displaystyle{\int_\Omega \nabla u \cdot \nabla v + \int_\gamma \lambda v} & = & \displaystyle{\int_\Omega f v} & \quad \forall v \in V \\
\displaystyle{\int_\gamma \mu u} & = & 0 & \quad \forall \mu \in \Lambda \, .
\end{array}
\end{equation*}
Let $\HH >0$, we discretize the boundary $\gamma$ by taking $N_\HH$ points $x_{i,\HH} \in \gamma$ so that the distance between two consecutive points is $\HH$. Let $V_h \subset V$ be a finite element space approximation of $V$. The Lagrange multipliers space and the constrained space are approximated as follows:
\begin{equation*} 
\Lambda_\HH = \mathbb{R}^{N_\HH} 
\end{equation*}
\begin{equation*}
K^h_\HH =  \left\{ v_h \in V_h ; ~v_h(x_{i, \HH}) = 0 \right\} \, .
\end{equation*}
Here the space $\Lambda_\HH$ is not included in $\Lambda$. The application $B$ becomes
\begin{equation*}
\begin{array}{lccc}
B_\HH^h : & V_h & \longrightarrow & \Lambda_\HH \\
& v_h &  \longrightarrow & \left(v_h(x_{i,\HH})\right) _{i = 1 \ldots N_\HH} \, .
\end{array}
\end{equation*}
We denote by $(\mathcal{P}_{h,\HH})$ the following approximated problem:

Find $(u_h,\lambda_\HH) \in V_h \times \Lambda_\HH$, such that
\begin{equation*}
\begin{array}{lcll}
\vspace{0.1cm}
a(u_h, v_h) + \left< \lambda_H , \, B_\HH^h v_h \right>  & = & \left< f , \, v_h \right> & \forall v_h \in V_h \\
\left< \mu_\HH , \, B_\HH^h u_h \right> & = & 0 & \forall \mu_\HH \in \Lambda_\HH \, .
\end{array}
\end{equation*}
Where $a$ is the bilinear form defined in $V \times V$ by
\begin{equation*}
a(u,v) = \int_\Omega \nabla u \cdot \nabla v \, .
\end{equation*}

We can apply the result of Proposition \ref{prop:abstract_theo} so we have the following error estimate,
\begin{equation*}
\| u - u_h \|_V \leq \left( 1 + \frac{\| a \|}{\alpha} \right) \inf_{w_h \in K_\HH^h} \| w_h - u \| + \frac{1}{\alpha} \inf_{\mu_\HH \in \Lambda_\HH}{\|\xi - \left(B^h_\HH\right)^* \mu_\HH \|}_{V'} \, .
\end{equation*}
As in the previous example, the first part of this error estimate is the usual one and the second part is the error done by approximating the linear form $\xi$ with a combination of Dirac masses. Indeed, for any $\mu_\HH = (\mu_i)_{1, \ldots, N_\HH}$ in $\Lambda_\HH$ and for any $v_h$ in $V_h$, one has,
\begin{equation*}
\left< \left(B^h_\HH\right)^* \mu_\HH ~ , ~ v_h \right> = \left(\mu_\HH ~ , ~ B^h_\HH v_h \right) 
=  \sum_{i=1}^{N_\HH} \mu_i v_h(x_i) 
= \left< \sum_{i=1}^{N_\HH} \mu_i \delta_{x_i} ~ , ~ v_h \right> \, .
\end{equation*}
The error estimate of Proposition \ref{prop:abstract_theo} becomes,
\begin{equation*}
\| u - u_h \|_1 \leq \left( 1 + \frac{\| a \|}{\alpha} \right) \inf_{w_h \in K_H^h} \| w_h - u \| + \frac{1}{\alpha} \inf_{\mu_\HH \in \Lambda_\HH}{\frac{\left< \xi - \sum_{i=1}^{N_\HH} \mu_i \delta_{x_i} ~ , ~ v_h \right>}{\|v_h\|}} \, .
\end{equation*}
Thus, by applying the result of Proposition \ref{prop:phiphih},
\begin{equation*}
\| u - u_h \|_1  \leq C \left( \sqrt{h} \| u \|_1  + \sqrt{\frac{\HH}{h}} ~ \HH^{1/2} \| \xi \|_{0, \gamma} \right) \, .
\end{equation*}
Again, if $\HH$ is equal to $h$, the error estimate writes,
\begin{equation*}
\| u - u_h \|_1  \leq C \sqrt{h} \left( \| u \|_1  + \| \xi \|_{0, \gamma} \right) \, .
\end{equation*}

\section{Numerical results}
\label{sec:3}
In this section we present two examples to validate the estimate~(\ref{ErrorEstimate}) of Proposition \ref{prop:phiphih}. The first one is a one dimensional example to show that in the case $\HH = h$ the error order is the one predicted by Proposition \ref{prop:phiphih}. The second example is a two dimensional numerical example where the solution of a Poisson equation with a right-hand side of the single layer distribution type is computed. This right-hand side is the Laplacian of a function with a jump of its normal derivatives across an interface. The exact solution is thus known and we compute the error in the case $\HH = h$ for various values of $s$.

\subsection{$1$d example}
Let $\varphi$ be the function defined by
\begin{equation*}
\varphi = \frac{1}{x^{1-s}} \, ,
\end{equation*}
where $s$ is positive, and $v_h$ be the affine function defined in $[0, \, \HH]$ by
\begin{equation*}
v_h(x) =  -\frac{\abs{x - \frac{\HH}{2}}}{h} + 1 \, .
\end{equation*}
The estimate of Proposition \ref{prop:phiphih} can be computed explicitly. Indeed, one has,
\begin{equation*}
\left< \varphi, \, v_hÊ\right> = \int_0^\HH \varphi v_h = \frac{\HH^s}{s} + \left(\frac{1}{s(s+1)} - \frac{1}{s(s+1)2^s} - \frac{1}{2s} \right) \frac{\HH^{s+1}}{h} \,.
\end{equation*}
Let $x_i$ be in $[0, \, \HH]$, the second part is
\begin{equation*}
\left< \varphi_h^\HH, \, v_hÊ\right> = \left(\int_0^\HH \varphi \right) v_h(x_i) = \left(-\frac{\abs{x_i - \frac{\HH}{2}}}{h} + 1\right) \frac{\HH^s}{s} \,.
\end{equation*}
Let $0 \leq \alpha \leq 1$, we write $x_i = \alpha \HH$. Thus, the error estimate writes
\begin{equation*}
\abs{\left< \varphi, \, v_hÊ\right> - \left< \varphi_h^\HH, \, v_hÊ\right>} = C(s) \frac{\HH^{s+1}}{h} \, , 
\end{equation*}
where $C(s)$ is a constant depending only on $s$. Now, if $h = C \HH$, where $C$ is a constant, the error is of order $s$ which is the order predicted by Proposition \ref{prop:phiphih}.

\subsection{Two-dimensional  example}
In this section $\Omega$ is the unit square and $\gamma$ is a circle of radius $R$. To investigate the behavior of the approach for a right-hand side $\varphi \delta_\gamma$, with $\varphi$ in $H^{-1/2 + s}(\gamma)$, we build the function $\varphi$ as an infinite series of sine functions
\begin{equation*}
\varphi = C \sum_{n=1}^\infty n^{-s} \sin(n \theta)
\end{equation*}
We now build a function $u$ such that the jump of its normal derivatives on $\gamma$ is $\varphi$ so that $u$ is the solution of a Poisson problem of the form
\begin{equation*}
\left|
\begin{array}{rcll}
-\Delta u & = & \varphi \delta_\gamma + f & \text{ in } \Omega \\
u & = & 0 & \text{ on } \partial \Omega
\end{array}
\right.
\end{equation*}
where $f$ is to be defined later on.

First we need a cut function to make sure that the solution $u$ is equal to zero on the boundary of $\Omega$. Let us denote by $\chi_\epsilon$ the $C^2$ function defined in $\mathbb{R}$ by
\begin{equation*}
\chi_\epsilon(x) = \left\{
\begin{array}{ll}
\vspace{0.2cm}
1 & \text{ if } x \leq 0 \\
\vspace{0.2cm}
\displaystyle{\frac{-6}{\epsilon^5} x^5 + \frac{15}{\epsilon^4} x^4 - \frac{10}{\epsilon^3} x^3 + 1} & \text{ if }Ê0 < x < \epsilon \\
0 & \text{ if } x \geq \epsilon
\end{array}
\right.
\end{equation*}
We define $\rho$ as the distance between $\gamma$ and $\partial \Omega$ (see figure \ref{fig:distance}). We denote by $R_{max}$ the sum of $R$ and $\rho$.
\begin{figure}
\psfrag{m}{$\rho$}
\psfrag{g}{$\gamma$}
\psfrag{O}{$\Omega$}
\psfrag{R}{$R$}
\psfrag{Rmax}{$R_{max}$}
\psfrag{chi1}{$\chi = 1$}
\psfrag{chi0}{$\chi = 0$}
\centering
\includegraphics[width=0.6 \linewidth]{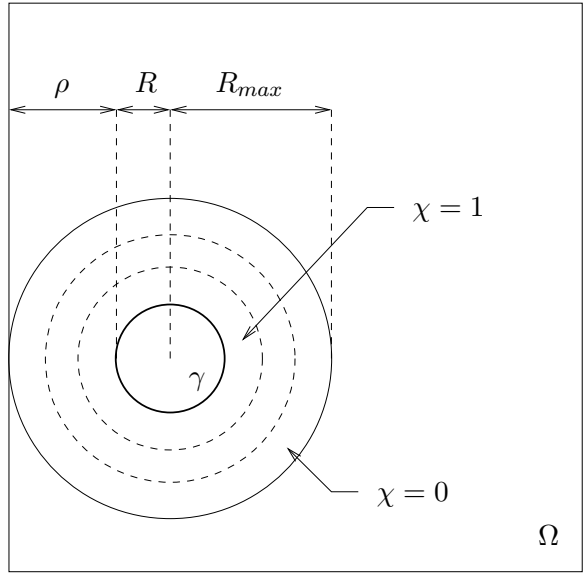}
\caption{Notations}
\label{fig:distance}
\end{figure}
Now, we can define the following function :
\begin{equation*}
u(r, \theta) = \left\{
\begin{array}{ll}
\vspace{0.3cm}
\displaystyle{\chi_{\rho/3}\left(r - \left(R+\frac{\rho}{3}\right)\right) \sum_{n = 1}^{\infty} \left(\frac{r}{R}\right)^{-n} n^{-s-1} \sin(n \theta)} & \quad \mathrm{ ifÊ}  ~ r \geq R \\
\displaystyle{u \left(r+\left(1-\frac{r}{R}\right)R_{max}\right)} & \quad \mathrm{ if } ~ r<R. \\
\end{array}
\right.
\end{equation*}
It is a function which has a jump of its normal derivative on $\gamma$ and oscillates very fast close to $\gamma$. Its Laplacian outside the particle is given by
\begin{eqnarray*}
\Delta u (r, \theta) & = &\chi_{\rho/3}^{''}\left(r - \left(R+\frac{\rho}{3}\right)\right) \sum_{n = 1}^{\infty}\left(\frac{r}{R}\right)^{-n} n^{-s-1} \sin(n \theta) \\
& + & \chi_{\rho/3}^{'}\left(r - \left(R+\frac{\rho}{3}\right)\right) \sum_{n = 1}^{\infty}\frac{1-2n}{R} \left(\frac{r}{R}\right)^{-n-1} n^{-s-1} \sin(n \theta) \, .
\end{eqnarray*}
The jump $\varphi$ of its normal derivatives is :
\begin{eqnarray*}
\varphi & = & \left( \left( 1 - \frac{R_{max}}{R} \right) - 1\right) \frac{\partial u}{\partial r}(R, \theta) \\
& = & \frac{R_{max}}{R^2} \sum_{n = 1}^{\infty} n^{-s} \sin(n \theta)
\end{eqnarray*}
which is in $H^{-1/2 + s - \epsilon}(\gamma)$ for all $\epsilon > 0$. This means that we have the exact solution of the following problem
\begin{equation*}
\left|
\begin{array}{rcll}
-\Delta u & = & \varphi \delta_\gamma + f & \text{ in } \Omega \\
u & = & 0 & \text{ on } \partial \Omega
\end{array}
\right.
\end{equation*}
where $f$ is given by the Laplacian of $u$ outside the particle.

To compute the numerical solution of this problem we truncate the series at some order $N$ greater than $2 \pi / h$ so that there is at least one period of the function $\sin(N \theta)$ in a mesh cell. We compute the numerical solution for several $s$ using $Q_1$ finite elements and compare it to the exact solution (see figure \ref{fig:ErrH1}). In all the tests, the order $N$ is equal to $2^{12}$ and $h$ takes its values from $2^{-6}$ to $2^{-10}$. The space step $\HH$ is equal to $h$ so that the $H^1$ error should be of the form,
\begin{equation*}
\| u - u_h \|_1  \leq C \left( \sqrt{h} |u|_2  + h^s \| \varphi \|_{-1/2 + s, \gamma} \right).
\end{equation*}

We can see in figure \ref{fig:ErrH1} that the error order seems to stay close to $0.6$ once $s$ is greater than $0.5$. This is because of the space approximation of order $1/2$ since we use a cartesian mesh with finite elements of order $1$, but it could also be because of the saturation of the error order when $s$ is greater than $0.5$ (see the remark right after Proposition \ref{prop:phiphih}).

Figure \ref{fig:OrderS} shows the order of the error as $s$ goes to $1$. When $s$ is less than $0.5$, the slope is equal to $1$ which is expected. The numerical order is always better than the theoretical one, but this kind of super-convergence often happens when one compute numerical solutions of PDE.

\begin{figure}
\centering
\includegraphics[width = 0.90 \linewidth]{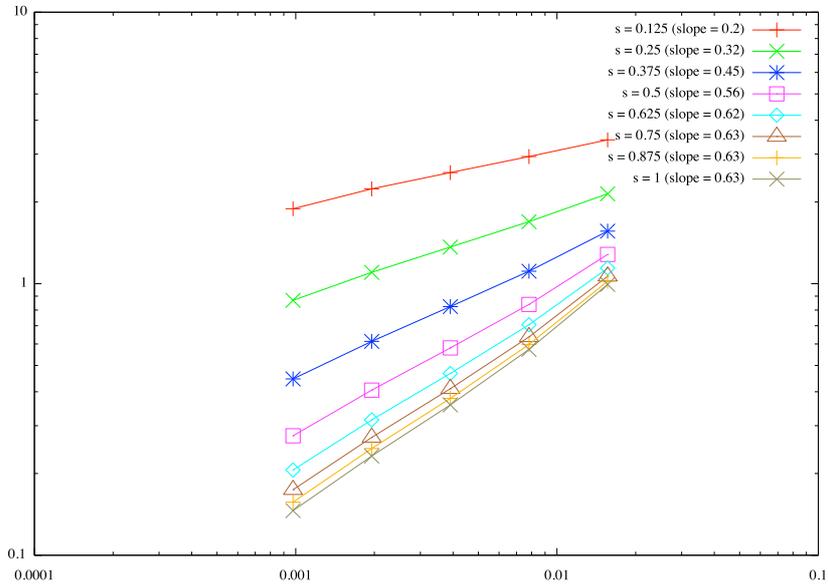}
\caption{$H^1$ error for some value of $s$}
\label{fig:ErrH1}
\end{figure}

\begin{figure}
\centering
\includegraphics[width = 0.90 \linewidth]{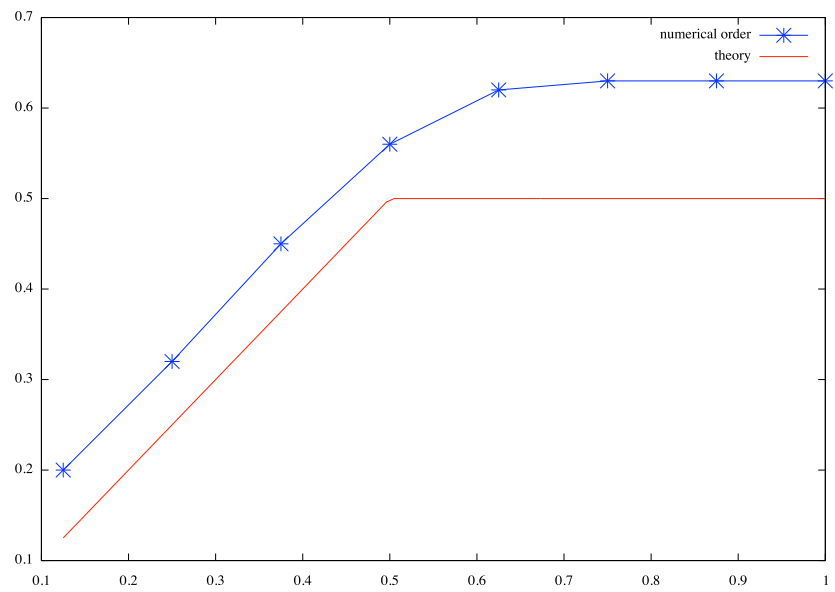}
\caption{Order of the $H^1$ error with respect to $s$}
\label{fig:OrderS}
\end{figure}

\end{document}